\newtheorem{theorem}{Theorem}[]
\newtheorem{proposition}[theorem]{Proposition}
\newtheorem{corollary}[theorem]{Corollary}
\theoremstyle{definition}
\newtheorem{definition}[theorem]{Definition}
\newtheorem{remark}[theorem]{Remark}
\newcommand{\Q}{\mathbb Q}
\newcommand{\HH}{\mathcal H}
\newcommand{\Gal}{\mathrm{Gal}}
\newcommand{\Hol}{\mathrm{Hol}}
\newcommand{\Sym}{\operatorname{Sym}}
\newcommand{\End}{\operatorname{End}}
\newcommand{\Aut}{\operatorname{Aut}}
\newcommand{\wK} {{\widetilde{K}}}
\title{Induced Hopf Galois structures}
\author{Teresa Crespo\footnote{Departament d'\`Algebra i Geometria, Universitat de Barcelona, Gran Via de les
Corts Catalanes 585, E-08007 Barcelona, Spain, e-mail: teresa.crespo@ub.edu}, Anna Rio\footnote{Departament de Matem\`{a}tica Aplicada II, Universitat Polit\`{e}cnica de Catalunya, C/Jordi Girona, 1-3- Edifici Omega, E-08034 Barcelona, Spain, e-mail: ana.rio@upc.edu} and Montserrat Vela\footnote{Departament de Matem\`{a}tica Aplicada II, Universitat Polit\`{e}cnica de Catalunya, C/Jordi Girona, 1-3- Edifici Omega, E-08034 Barcelona, Spain, e-mail: montse.vela@upc.edu}}
\date{\today}
\begin{document}

\maketitle

\begin{abstract} For a finite Galois extension $K/k$ and an intermediate field $F$ such that $\Gal(K/F)$ has a normal complement in $\Gal(K/k)$, we construct and characterize Hopf Galois structures on $K/k$ which are induced by a pair of Hopf Galois structures on $K/F$ and $F/k$.
\end{abstract}

\section{Introduction}

A finite extension of fields $K/k$ is a Hopf Galois extension if
there exist a finite cocommutative $k$-Hopf algebra $\HH$  and a
Hopf action of $\HH$ on $K$, i.e a $k$-linear map $\mu: \HH\to
\End_k(K)$ inducing a bijection $K\otimes_k \HH\to\End_k(K)$. We
shall call such a pair $(\HH,\mu)$ a Hopf Galois structure on $K/k$.
Hopf Galois extensions were introduced by Chase and Sweedler in \cite{C-S}.
For separable field extensions, Greither and
Pareigis \cite{G-P} give the following group-theoretic
characterization of the Hopf Galois property.

\begin{theorem}
Let $K/k$ be a separable field extension of degree $n$, $\wK$ its Galois closure, $G=\Gal(\wK/k), G'=\Gal(\wK/K)$. Then
$K/k$ is a Hopf Galois extension if and only if there exists a
regular subgroup $N$ of $S_n$ normalized by $\lambda (G)$, where
$\lambda:G \rightarrow S_n$ is the morphism given by the action of
$G$ on the left cosets $G/G'$.
\end{theorem}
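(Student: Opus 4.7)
The plan is to translate the problem from $k$ to the Galois closure $\wK$ via Galois descent, where cocommutative Hopf algebras split as group algebras. The key preliminary observation is the $G$-equivariant isomorphism of $\wK$-algebras $K \otimes_k \wK \cong \mathrm{Map}(G/G',\wK)$, where $G$ acts on $\wK$ naturally and permutes the coordinates of $G/G'$ through $\lambda$.

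For the forward direction, starting from a Hopf Galois structure $(\HH,\mu)$, I would base change to $\wK$. The cocommutative $\wK$-Hopf algebra $\HH \otimes_k \wK$ becomes a group algebra $\wK[N]$ for some finite group $N$ of order $n$, and the condition that $K \otimes_k \HH \to \End_k(K)$ is a bijection translates, after extension of scalars, to the requirement that $N$ act regularly on $G/G'$ inside $\Perm(G/G') = S_n$. The semilinear $G$-action on $\wK[N]$ encoding descent to $\HH$ must preserve the group algebra structure and commute with the action on $\mathrm{Map}(G/G',\wK)$; this forces the $G$-action on $N$ to be by conjugation inside $S_n$ via $\lambda$, which says exactly that $\lambda(G)$ normalizes $N$.

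For the converse, given a regular subgroup $N \subseteq S_n$ normalized by $\lambda(G)$, I would take $\HH := (\wK[N])^G$, where $G$ acts $\wK$-semilinearly via its natural action on $\wK$ and by conjugation through $\lambda$ on $N$. By Galois descent, $\HH$ is a cocommutative $k$-Hopf algebra of dimension $n$, and the standard action of $\wK[N]$ on $\mathrm{Map}(G/G',\wK)$ restricts on $G$-invariants to a $k$-linear action of $\HH$ on $K$. Regularity of $N$ implies that the induced map $K \otimes_k \HH \to \End_k(K)$ becomes an isomorphism after tensoring with $\wK$, hence is already an isomorphism over $k$.

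The main obstacle is the bookkeeping between two structures on $\wK[N]$: the Hopf algebra structure acting on $\mathrm{Map}(G/G',\wK)$, and the semilinear descent datum from $G$. The delicate step is to verify that any Hopf-compatible descent datum must act on $N$ by conjugation inside $\Perm(G/G')$ via $\lambda$, and conversely that a normalizing action always integrates into a well-defined semilinear Hopf action. This requires a careful analysis using the canonical basis of primitive idempotents of $\mathrm{Map}(G/G',\wK)$, whose permutation behavior under $N$ and under $G$ rigidifies the compatibility.
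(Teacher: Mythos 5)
This statement is the Greither--Pareigis theorem, which the paper quotes from \cite{G-P} without proof, so there is no in-paper argument to compare against; your outline does follow the strategy of \cite{G-P} itself (base change to $\wK$, the $G$-equivariant splitting $K\otimes_k\wK\cong \mathrm{Map}(G/G',\wK)$, and Galois descent). However, as written the proposal has a genuine gap at its central step. You assert that ``the cocommutative $\wK$-Hopf algebra $\HH\otimes_k\wK$ becomes a group algebra $\wK[N]$'' as if this followed from cocommutativity alone. It does not: $\wK$ is only the Galois closure of $K/k$, not a separable closure, and a finite cocommutative Hopf algebra over such a field need not be split. That $\wK$ already splits $\HH$ is precisely the content that has to be proved, and it can only be extracted from the Hopf Galois property itself: one must show that a Hopf Galois structure on the split \'etale algebra $\mathrm{Map}(G/G',\wK)$ over $\wK$ forces the Hopf algebra to be spanned by its grouplike elements, which then permute the primitive idempotents $\{e_{gG'}\}$ and form a regular subgroup $N$ of $\Perm(G/G')$ by the bijectivity of the Galois map. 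This lemma is where essentially all the work of the theorem lies, and it is absent from your argument.

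The same applies to the second half of the forward direction: you state that the semilinear descent datum ``must act on $N$ by conjugation via $\lambda$'' and that this is ``the delicate step,'' but you never carry out the computation with the idempotent basis that establishes it. Your converse direction (taking $\HH=(\wK[N])^G$ and descending the translation action) is correctly set up, though the verification that the resulting map $K\otimes_k\HH\to\End_k(K)$ is bijective again rests on the regularity computation over $\wK$ that you only gesture at. In short: the plan is the right plan, but the two points you yourself flag as the ``main obstacle'' and the ``delicate step'' are exactly the theorem's mathematical content, so the proposal is an outline of a proof rather than a proof.
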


For a given Hopf Galois structure on $K/k$, we will refer to the isomorphism class of the corresponding group $N$ as the type of the Hopf Galois
structure. The Hopf algebra $\HH$ corresponding to a regular subgroup $N$ of $S_n$ normalized by $\lambda (G)$ is the subalgebra of the group algebra $\wK[N]$ fixed under the action of $G$, where $G$ acts on $\wK$ by $k$-automorphisms and on $N$ by conjugation through $\lambda$. It is known that the Hopf subalgebras of $\HH$ are in 1 to 1 correspondence with the subgroups of $N$ stable under the action of $G$. In the sequel for an extension $K/k$ endowed with a Hopf Galois structure with Hopf algebra $\HH$ corresponding to a group $N$, and $N'$ a $G$-stable subgroup of $N$, we shall denote by $K^{N'}$ the subfield of $K$ fixed by the Hopf subalgebra of $\HH$ corresponding to $N'$ and refer to it as the subfield of $K$ fixed by $N'$ (see \cite{CRV} Theorem 2.3).

Childs \cite{Ch1} gives an equivalent more effective condition to the Hopf Galois property introducing the holomorph of the regular subgroup $N$ of $S_n$. We state the more precise formulation of this result due to Byott \cite{B} (see also \cite{Ch2} Theorem 7.3).

\begin{theorem}\label{theoB} Let $G$ be a finite group, $G'\subset G$ a subgroup and $\lambda:G\to \Sym(G/G')$ the morphism given by the action of
$G$ on the left cosets $G/G'$.
Let $N$ be a group of
order $[G:G']$ with identity element $e_N$. Then there is a
bijection between
$$
{\cal N}=\{\alpha:N\hookrightarrow \Sym(G/G') \mbox{ such that
}\alpha (N)\mbox{ is regular}\}
$$
and
$$
{\cal G}=\{\beta:G\hookrightarrow \Sym(N) \mbox{ such that }\beta
(G')\mbox{ is the stabilizer of } e_N\}
$$
Under this bijection, if $\alpha\in {\cal N}$ corresponds to
$\beta\in {\cal G}$, then $\alpha(N)$ is normalized by
$\lambda(G)$ if and only if $\beta(G)$ is contained in the
holomorph $\Hol(N)$ of $N$.
\end{theorem}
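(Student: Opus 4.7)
The plan is to construct the bijection explicitly using the regularity of $\alpha(N)$, which lets us identify the sets $G/G'$ and $N$. Given a regular embedding $\alpha \in \mathcal{N}$, the map $\phi_\alpha \colon N \to G/G'$ defined by $\phi_\alpha(n) = \alpha(n)(G')$ is a bijection sending $e_N$ to the trivial coset $G'$. Conjugating the left-coset action $\lambda \colon G \to \Sym(G/G')$ through $\phi_\alpha$ yields $\beta \colon G \to \Sym(N)$ with $\beta(g) = \phi_\alpha^{-1} \circ \lambda(g) \circ \phi_\alpha$. Since $\lambda(g)$ fixes $G'$ iff $g \in G'$, we read off that $\beta(G')$ is exactly the stabilizer of $e_N$; the injectivity of $\beta$ comes from the faithfulness of the $G$-action on $G/G'$ (the relevant setting being when the core of $G'$ in $G$ is trivial).

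Conversely, given $\beta \in \mathcal{G}$, the orbit-stabilizer theorem tells us that the $\beta(G)$-orbit of $e_N$ has cardinality $[G:G'] = |N|$, so it equals $N$. Hence $\phi_\beta \colon G/G' \to N$, $\phi_\beta(gG') = \beta(g)(e_N)$, is a well-defined bijection. Transporting the regular left-translation action $\lambda_N \colon N \hookrightarrow \Sym(N)$ back along $\phi_\beta$ produces the desired $\alpha \colon N \to \Sym(G/G')$, and $\alpha(N)$ is regular because $\lambda_N(N)$ is. A direct verification, which I would carry out by chasing the defining formulas, shows that the two assignments $\alpha \mapsto \beta$ and $\beta \mapsto \alpha$ are mutually inverse.

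For the second assertion, the key observation is that $\phi_\alpha$ conjugates $\alpha(N) \subset \Sym(G/G')$ to $\lambda_N(N) \subset \Sym(N)$ and $\lambda(G) \subset \Sym(G/G')$ to $\beta(G) \subset \Sym(N)$ simultaneously. Therefore $\lambda(G)$ normalizes $\alpha(N)$ in $\Sym(G/G')$ if and only if $\beta(G)$ normalizes $\lambda_N(N)$ in $\Sym(N)$. Invoking the classical description of the holomorph as $\Hol(N) = \Norm_{\Sym(N)}(\lambda_N(N)) = N \rtimes \Aut(N)$ converts the latter condition into $\beta(G) \subset \Hol(N)$, completing the equivalence.

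The step I expect to require the most care is not any single deduction but the bookkeeping of conventions: one must choose consistently between left and right translation on $N$ and between acting on $G/G'$ or on cosets $G'\!\setminus\! G$, so that the transported copy of $\alpha(N)$ really is $\lambda_N(N)$ and not a conjugate, and so that the holomorph appears on the correct side. Once that is fixed, everything else is a formal verification. The only substantive hypothesis beyond this bookkeeping is that the action of $G$ on $G/G'$ be faithful, which guarantees that $\beta$ is an embedding and is automatic in the Hopf Galois setting where $\wK$ is the Galois closure of $K/k$.
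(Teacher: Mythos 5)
The paper itself gives no proof of this theorem: it is quoted from Byott \cite{B} (see also \cite{Ch2}, Theorem 7.3), and your transport-of-structure argument is essentially the proof found in those sources, so it is correct. The one step you defer --- that $\alpha\mapsto\beta$ and $\beta\mapsto\alpha$ are mutually inverse --- does go through in one line, since $\phi_\beta(gG')=\beta(g)(e_N)=\phi_\alpha^{-1}(gG')$ and $\alpha(n)\circ\phi_\alpha=\phi_\alpha\circ\lambda_N(n)$ by the homomorphism property of $\alpha$; and you are right to flag that injectivity of $\beta$ requires the core of $G'$ in $G$ to be trivial, which is automatic in the Greither--Pareigis setting where $G'=\Gal(\wK/K)$ with $\wK$ the Galois closure of $K/k$ (and which is in fact forced whenever $\mathcal G\neq\emptyset$).
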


Let us recall that the inclusion of $\Hol(N)=N\rtimes \Aut N$ in $\Sym(N)$ is given by sending $n \in N$ to left translation by $N$ and $\sigma \in \Aut N$ to itself considered as a permutation.

\vspace{0.5cm}
In this paper we consider a finite Galois extension $K/k$ with group $G$ such that $G$ is the semi-direct product of a subgroup $G'$ and a normal subgroup $H$. For $F$ the subfield of $K$ fixed by $G'$ we prove in theorem \ref{ind} that a Hopf Galois structure on $F/k$ of type $N_1$ together with a Hopf Galois structure on $K/F$ of type $N_2$ induces a Hopf Galois structure on $K/k$ of type the direct product $N_1\times N_2$. We shall call such Hopf Galois structures induced. We shall refer to a Hopf Galois structure of type a direct product as an split Hopf Galois structure. In theorem \ref{char}, we characterize the split Hopf Galois structures of $K/k$ which are induced. In section 3 we give examples of induced and split non-induced Hopf Galois structures. In section 4 we determine the number of induced Hopf Galois extensions for some Galois extensions and obtain some cases in which all split structures are induced.

\section{Induced Hopf Galois structures}

\begin{theorem}\label{ind}
 Let $K/k$ be a finite Galois field extension, $G=\Gal(K/k)$ and let $F$ be a
 field with $k \subset F \subset K$ such that $G'=\Gal(K/F)$ has a normal complement in $G$. Let $r=[K:F]$, $t=[F:k]$, $n=[K:k]$ and assume that $N_1 \subset S_t$ gives $F/k$ a Hopf Galois structure and $N_2 \subset S_r$ gives $K/F$ a Hopf Galois structure. Then $N_1 \times N_2 \subset S_t \times S_r \subset S_n$ gives $K/k$ a  Hopf Galois structure.
\end{theorem}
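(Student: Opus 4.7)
The plan is to apply the Greither--Pareigis criterion (Theorem~1) to the Galois extension $K/k$: I need to exhibit $N_1\times N_2$ as a regular subgroup of $\Sym(G)$ normalized by the left-translation image $\lambda(G)$. The semidirect decomposition $G=H\rtimes G'$ (furnished by the normal complement $H$) yields two natural bijections: $G/G'\leftrightarrow H$ via $hG'\mapsto h$, and $G\leftrightarrow H\times G'$ via $h\gamma\mapsto(h,\gamma)$. Under the first, the given Hopf Galois structure on $F/k$ places $N_1$ inside $\Sym(H)$ as a regular subgroup of cardinality $t$; since $K/F$ is Galois with group $G'$, $N_2$ sits regularly inside $\Sym(G')$. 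Under the second bijection, $\Sym(H)\times\Sym(G')$ embeds into $\Sym(G)$ by coordinate-wise action, and a product of regular actions is regular, so $N_1\times N_2$ acts regularly on $H\times G'=G$ of cardinality $tr=n$.

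It remains to verify that $\lambda(G)$ normalizes $N_1\times N_2$. For $g=h\gamma\in G$ and $(h',\gamma')\in H\times G'$, normality of $H$ in $G$ gives
$$
\lambda(h\gamma)(h',\gamma')=h\gamma h'\gamma'=h(\gamma h'\gamma^{-1})\gamma\gamma'=(h\cdot\gamma h'\gamma^{-1},\ \gamma\gamma').
$$
Since $G$ is generated by $H$ and $G'$, it suffices to handle $\lambda(h_0)$ for $h_0\in H$ and $\lambda(\gamma_0)$ for $\gamma_0\in G'$ separately. The former acts on $H\times G'$ as $(L_{h_0},\Id)$ and the latter as $(C_{\gamma_0},L_{\gamma_0})$, where $L$ denotes left translation and $C_{\gamma_0}$ is the conjugation $h'\mapsto\gamma_0 h'\gamma_0^{-1}$ on $H$. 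Both elements lie in $\Sym(H)\times\Sym(G')$, so they normalize $N_1\times N_2$ as soon as each coordinate normalizes the corresponding factor. For the second coordinate, $\Id$ and $L_{\gamma_0}$ normalize $N_2$ trivially or by the Greither--Pareigis hypothesis for $K/F$. For the first coordinate, let $\widetilde{F}$ be the Galois closure of $F/k$ inside $K$ with $\widetilde{G}_1=\Gal(\widetilde F/k)$ and $\widetilde{G}_1'=\Gal(\widetilde F/F)$; the canonical bijection $G/G'\cong \widetilde G_1/\widetilde G_1'\cong H$ intertwines the $G$-action with the $\widetilde G_1$-action, and tracing it through shows that both $L_{h_0}$ and $C_{\gamma_0}$ lie in the image $\lambda_1(\widetilde{G}_1)$, which normalizes $N_1$ by hypothesis.

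The main obstacle is really bookkeeping rather than any deep step: one must pin down the twisted action of $G'$ on the coset space $G/G'$, which the semidirect structure forces to be conjugation on $H$, and recognize that this twist is exactly what the Greither--Pareigis normalization condition for the Hopf Galois structure on $F/k$ already accommodates. Once these identifications are in place, regularity and normalization both reduce to separating $G$-actions into their first and second coordinates on $H\times G'$, and the verification is straightforward.
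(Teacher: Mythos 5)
Your proof is correct and takes essentially the same approach as the paper: both identify $G$ with $H\times G'$ via the normal complement, observe that left translation by $g=h\gamma$ acts coordinate-wise as (the coset action of $g$ on $H\cong G/G'$, left translation by $\gamma$ on $G'$), and deduce regularity and normalization of $N_1\times N_2$ from the Greither--Pareigis hypotheses on each factor. Your only (harmless) deviations are checking normalization on the generators $h_0\in H$, $\gamma_0\in G'$ rather than on a general $g=xy$ in one formula, and phrasing the identifications intrinsically instead of via the transversal indices $x_i,y_j$.
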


\begin{proof} The action of $G'$ on itself by left translation
gives rise to a morphism $\lambda_{r}: G'\to S_r$. Let $\widetilde{F}$ be the normal closure of $F/k$ in $K$, $G''=\Gal(K/\widetilde{F})$. Then $\Gal(\widetilde{F}/k)\simeq G/G''$, $\Gal(\widetilde{F}/F)\simeq G'/G''$ and the action of $G/G''$ by left translation on the left cosets $(G/G'')/(G'/G'')$ induces an action of $G$ on the left cosets $G/G'$, since the sets $(G/G'')/(G'/G'')$ and $G/G'$ are in bijection with each other. This gives rise to a morphism $\lambda_t:G \to S_t$.

Let $H=\{x_1, \dots, x_t\}$ be a normal complement of $G'$ in $G$. Then $x_1, \dots, x_t$ is a left transversal for  $G/G'$.  The action of $g\in G$ on $G/G'$ is given by $g\cdot x_iG'= x_{\lambda_t(g)(i)} G'$.
Let $G'=\{ y_1, \dots, y_r \}$. The action of $g' \in G'$ is given by $g' \cdot y_j= y_{\lambda_r(g')(j)}$.

Now, since $N_1 \subset S_t$ gives $F/k$ a Hopf Galois structure,
$N_1$ is normalized by $\lambda_t(G)$ and, since $N_2 \subset S_r$ gives $K/F$ a Hopf Galois structure, $N_2$ is normalized by $\lambda_r(G')$. That is, for every $a\in N_1,\  g \in G$, there exists $a'\in N_1$ such that $\lambda_t(g)a=a'\lambda_t(g)$, and for every
$b\in N_2,\  g' \in G'$, there exists $b'\in N_2$ such that $\lambda_r(g')b=b'\lambda_r(g')$.

Now we have $G=\{x_iy_j, 1\leq i \leq t, 1 \leq j \leq r\}$ and, for $g \in G, g=xy, x \in H, y \in G'$, we have $yx_iy^{-1} \in H$, since $H\vartriangleleft G$, hence $yx_i=x_{\lambda_t(y)(i)} y$. The action of  $G$ on itself by left translation is then given by

\begin{equation}\label{act} gx_iy_j=x(yx_i)y_j=xx_{\lambda_t(y)(i)} y y_j = x_{\lambda_t(g)(i)} y_{\lambda_r(y)(j)},
\end{equation}

\noindent and induces a monomorphism

$$\lambda:G \hookrightarrow S_{n}=\Sym(\{1,\dots,t\}\times\{1,\dots,r\}).$$

We recall that we have an injective morphism $\iota: S_t\times S_r \hookrightarrow S_{tr}$. The element
 $(\sigma,\tau)$ in $S_t\times S_r$ is sent to the permutation of $S_{tr}$ given by
$$\begin{array}{lccc} (\sigma,\tau): &\{ 1,\dots,t \}\times \{ 1,\dots,r\} & \rightarrow & \{ 1,\dots,t \}\times \{ 1,\dots,r\} \\ & (i_1,i_2) & \mapsto & (\sigma(i_1),\tau(i_2))\end{array}$$

\noindent If $N_1$ is a regular subgroup of $S_t$ and $N_2$ is a regular subgroup of $S_r$, then under the above monomorphism, $N_1\times N_2$ is a regular subgroup of   $S_{n}$: it is transitive and its order is $tr=n$. Let us check that, if $N_1$ is normalized by $\lambda_t(G)$ and $N_2$ is normalized by $\lambda_s(G')$, then $N_1\times N_2$ is normalized by $\lambda(G)$. For $g=xy \in G, x \in H, y \in G', a \in N_1, b \in N_2$, we have
\begin{equation}\label{G}
\begin{array}{lll}
\lambda(g)(a,b)&=& (\lambda_t(g)a,\lambda_r(y)b)=(a'\lambda_t(g),b'\lambda_r(y)) \\
&=& (a',b')(\lambda_t(g),\lambda_r(y))=(a',b')\lambda(g).
\end{array}
\end{equation}
\end{proof}

\begin{remark} In the case $K=\widetilde{F}$, the result in Theorem \ref{ind} follows from Theorem 6.1 in~\cite{CRV2}.
\end{remark}

\begin{definition} A Hopf Galois structure on a Galois extension $K/k$ with Galois group $G$ will be called \emph{induced} if it is obtained as in theorem \ref{ind} for some field $F$ with $k\subsetneq F \subsetneq K$ and given Hopf Galois structures on $F/k$ and $K/F$. It will be called \emph{split} if the corresponding regular subgroup of $Sym(G)$ is the direct product of two nontrivial subgroups.
\end{definition}

\begin{corollary}\label{cor}
A Galois extension $K/k$ with Galois group $G=H\rtimes G'$ has at least one split Hopf Galois structure of type
$H\times G'$.
\end{corollary}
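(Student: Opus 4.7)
The plan is to produce the split Hopf Galois structure of type $H \times G'$ by applying Theorem~\ref{ind} to the intermediate field $F = K^{G'}$. Since $K/k$ is Galois, $K/F$ is Galois with group $G'$, so it carries its classical Hopf Galois structure, i.e.\ $N_2 = G'$ embedded in $\Sym(G')$ by left translation. It then suffices to exhibit a Hopf Galois structure of type $H$ on $F/k$, and Theorem~\ref{ind} will produce the desired induced structure of type $N_1 \times N_2 = H \times G'$, which is split by definition.

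The heart of the argument is therefore the Hopf Galois structure on $F/k$. First, I would identify the coset space $G/G'$ with $H$ via the complement, so that $\lambda_t \colon G \to S_t = \Sym(H)$ can be computed explicitly: writing $g = xy$ with $x \in H$, $y \in G'$, the formula in \eqref{act} of the preceding proof gives
\[
\lambda_t(g)(x_i) = x \cdot (y x_i y^{-1}),
\]
where $y x_i y^{-1} \in H$ because $H$ is normal in $G$. Thus $\lambda_t(g)$ is the composition of the inner automorphism of $H$ induced by $y \in G'$ with left translation by $x \in H$, so
\[
\lambda_t(G) \subseteq \Hol(H) = H \rtimes \Aut(H) \subseteq \Sym(H),
\]
where $H$ sits inside $\Sym(H)$ via its left regular representation. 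In particular $\lambda_t(G)$ normalizes $N_1 := H \subseteq \Sym(H)$, and $N_1$ is regular. By the Greither--Pareigis theorem (Theorem~1 of the paper), this yields a Hopf Galois structure on $F/k$ of type $H$.

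With $N_1$ and $N_2$ in hand, Theorem~\ref{ind} immediately delivers a Hopf Galois structure on $K/k$ of type $H \times G'$; since both factors are nontrivial (we may assume $G' \neq 1$ and $H \neq 1$, otherwise $K = F$ or $F = k$ and the statement is trivial with the classical structure), this structure is split by definition.

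The only potentially delicate point is verifying that the formula $\lambda_t(g)(x_i) = x \cdot (y x_i y^{-1})$ lands in $\Hol(H)$ as a function of $g$, i.e.\ that the map $g \mapsto (\mathrm{left\ mult.\ by\ }x) \circ (\mathrm{conj.\ by\ }y)$ really takes values in $\Hol(H)$; but this is immediate from the semidirect product decomposition $G = H \rtimes G'$, so there is no genuine obstacle beyond unwinding the definitions already set up in the proof of Theorem~\ref{ind}.
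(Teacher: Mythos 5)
Your proof is correct and follows essentially the same route as the paper: take $F=K^{G'}$, give $K/F$ its classical structure with $N_2=G'$, observe that $F/k$ carries a Hopf Galois structure of type $H$ (the paper simply cites that $F/k$ is almost classically Galois because $H$ maps to a normal complement of $\Gal(\widetilde F/F)$ in $\Gal(\widetilde F/k)$, whereas you verify the same fact directly by computing $\lambda_t(g)=(\text{left translation by }x)\circ(\text{conjugation by }y)\in\Hol(H)$), and then apply Theorem~\ref{ind}. The only nitpick is that conjugation by $y\in G'$ restricted to $H$ is an automorphism of $H$ but not in general an \emph{inner} one; this does not affect the argument since any element of $\Aut(H)$ lies in $\Hol(H)$.
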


\begin{proof} Let $F=K^{G'}$ and let $\widetilde{F}$ be the normal closure of $F$ in $K$. Then $K/F$ is Galois with group $G'$ and $F/k$ is almost classically Galois of type $H$ since $H$ is a normal complement of $\Gal(\widetilde{F}/F)$ in $\Gal(\widetilde{F}/k)$. By theorem \ref{ind}, these two Hopf Galois structures induce a Hopf Galois structure on $K/k$ of type $H\times G'$.
\end{proof}

\begin{remark} Let us note that under the action of $G$ given by (\ref{G}), both $N_1$ and $N_2$ are $G$-stable subgroups of $N_1 \times N_2$.

\end{remark}

Taking into account Theorem \ref{theoB}, we can reformulate the construction of induced Hopf Galois structures in terms of holomorphs.
The regular subgroup $N_1$ of $S_t$ gives $F/k$ a Hopf Galois structure if and only if there is a monomorphism $\varphi_1: G \rightarrow \Hol(N_1)$ such that $\varphi_1(G')$ is the stabilizer of $1_{N_1}$ and the regular subgroup $N_2$ of $S_r$ gives $K/F$ a Hopf Galois structure if and only if there is a monomorphism $\varphi_2: G' \rightarrow \Hol(N_2)$ such that $\varphi_2(1_{G'})$ is the stabilizer of $1_{N_2}$. If we write an element $g \in G$ as $g=xy$, with $x \in H, y \in G'$, as above, the induced Hopf Galois structure on $K/k$ is then given by

$$\begin{array}{cccccc} \varphi: & G & \rightarrow & \Hol(N_1)\times \Hol(N_2) & \stackrel{\iota}{\hookrightarrow} & \Hol(N_1\times N_2) \\ & g=xy &\mapsto & (\varphi_1(g),\varphi_2(y))& &  \end{array}$$

\noindent where the monomorphism $\iota$ of $\Hol(N_1)\times \Hol(N_2)$ into $\Hol(N_1\times N_2)$ is the restriction of $\iota: \Sym(N_1)\times \Sym(N_2) \rightarrow \Sym(N_1\times N_2)$ to $\Hol(N_1)\times \Hol(N_2)$ whose image is clearly contained in $\Hol(N_1\times N_2)$. Let us check that $\varphi(1_G)$ is the stabilizer of $1_{N_1\times N_2}$. Indeed, for $g=xy \in G, x \in H, y \in G'$, $\varphi(g)(1_{N_1\times N_2})= 1_{N_1 \times N_2}$ is equivalent to $\varphi_1(g)(1_{N_1})= 1_{N_1}$ and $\varphi_2(y)(1_{N_2})= 1_{N_2}$ then to $g \in G'$ and $y=1_{G'}$, hence to $g=1_G$.

\vspace{0.5cm}
Given a Galois extension $K/k$ of degree $n$ with Galois group $G$ and a regular subgroup $N=N_1 \times N_2$ of $S_n$ giving $K/k$ a split  Hopf Galois structure, we want to determine under which conditions this Hopf Galois structure is induced.  From theorem \ref{ind} we know that the following conditions are necessary.

\begin{enumerate}[1)]
\item  $N_1$ and $N_2$ are $G$-stable,
\item If $F=K^{N_2}$ and $G'=\Gal(K/F)$, then $G'$ has a normal complement in $G$.
\end{enumerate}

\noindent We shall see in theorem \ref{char} that these conditions are also sufficient.
A first step in this direction is the following.

\begin{proposition}\label{fs} Let $K/k$ be a finite Galois field extension, $n=[K:k]$, $G=\Gal(K/k)$. Let $K/k$ be given a Hopf Galois structure such that the corresponding regular subgroup $N$ of $S_n$ has a $G$-stable subgroup $N_2$. Let $F=K^{N_2}$, $G'=\Gal(K/F)$ and $r=[K:F]$. Then $N_2$ is a regular subgroup of $S_r$ normalized by $\lambda_r(G')$.
\end{proposition}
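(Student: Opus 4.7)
Since $K/k$ is Galois, $\lambda$ is just the left regular representation of $G$, and I identify $S_n = \Sym(G)$, so that $N \subset \Sym(G)$ is regular and $\lambda(G)$-normalized. The plan is to transfer from the action of $N_2$ on all of $G$ to its action on a single orbit, namely $G' = \Gal(K/F)$ viewed as a subset of $G$, and check regularity and the normalizer condition there.

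The crux is the identification $N_2 \cdot 1_G = G'$ as subsets of $G$. By \cite{CRV} Theorem 2.3, the Hopf-theoretic fixed field has degree $[K^{N_2}:k] = [N:N_2]$, so $r = [K:F] = |N_2|$; in particular the orbit $N_2 \cdot 1_G$ has cardinality $|N_2| = r = |G'|$ by regularity of $N$ on $G$. The same Galois correspondence identifies the Hopf fixed field $K^{N_2}$ with the classical fixed field $K^{G'}$; I would exploit this to promote the cardinality match to a set-theoretic equality $G' = N_2 \cdot 1_G$ inside $G$. This step, which requires carefully aligning the action of the Hopf subalgebra corresponding to $N_2$ with that of the classical subgroup $G'$ on $K$, is the main obstacle of the proof.

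Granted $N_2 \cdot 1_G = G'$, the subset $G' \subset G$ is $N_2$-invariant, and restriction to it defines an injective group homomorphism $N_2 \hookrightarrow \Sym(G') = S_r$: the restriction is injective because $N_2$, as a subgroup of the regular $N$, acts freely on $G$; and the image is transitive on $G'$ (the full $N_2$-orbit) and of order $r$, hence regular. For the normalizer condition, observe that for any $g' \in G'$ the permutation $\lambda(g')$ of $G$ preserves $G'$ (left multiplication of $G'$ on itself), and its restriction to $G'$ is exactly $\lambda_r(g')$. Since $\lambda(G) \supseteq \lambda(G')$ normalizes $N_2$ in $\Sym(G)$, restricting this conjugation to the $\lambda(G')$- and $N_2$-invariant set $G'$ yields that $\lambda_r(G')$ normalizes the image of $N_2$ in $\Sym(G') = S_r$, completing the proof.
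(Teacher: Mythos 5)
Your overall architecture is viable, but it contains a genuine gap precisely at the step you yourself flag as ``the main obstacle'': the set-theoretic identification $N_2\cdot 1_G=G'$ inside $G$. Everything downstream of that identification (the $N_2$-invariance of $G'$, injectivity of the restriction $N_2\hookrightarrow\Sym(G')$ from freeness of the regular action, transitivity on the orbit, and the fact that $\lambda(g')|_{G'}=\lambda_r(g')$ so that normalization restricts) is routine and correct. But the identification itself \emph{is} the content of the proposition: it is exactly the compatibility between the Hopf--Galois correspondence ($N_2\mapsto K^{N_2}$) and the classical Galois correspondence ($G'\mapsto K^{G'}$), and citing \cite{CRV} only for the degree count $[K^{N_2}:k]=[N:N_2]$ gives you equality of cardinalities, not of subsets. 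A cardinality match between $N_2\cdot 1_G$ and $G'$ does not force them to coincide; to close this you must actually trace through the Greither--Pareigis identification $K\otimes_k K\simeq \mathrm{Map}(G,K)$, under which $K^{N_2}$ consists of the elements whose associated function is constant on $N_2$-orbits while $K^{G''}$ consists of those constant on left cosets of $G''$, so that the orbit of $1_G$ is forced to be the subgroup $G'$. (This is the same fact the paper invokes later, in the proof of Theorem \ref{char}, via the bijection $b:G\to N$ from the proof of Theorem 7.3 in \cite{Ch2}.) As written, your proof assumes its hardest step.

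It is worth noting that the paper avoids this orbit computation entirely by arguing at the level of Hopf algebras: by \cite{C-S} Theorem 7.6 the Hopf subalgebra $H_2=K[N_2]^G$ makes $K/F$ Hopf Galois with algebra $H_2\otimes_k F$; base change gives $(H_2\otimes_k F)\otimes_F K\simeq K[N_2]$; and since $K/F$ is Galois with group $G'$, the Greither--Pareigis classification identifies this structure with a regular subgroup of $S_r$ normalized by $\lambda_r(G')$, necessarily $N_2$. If you prefer your combinatorial route, you need to supply the orbit--coset argument explicitly; otherwise the descent argument of the paper is the cleaner way to obtain the conclusion.
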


\begin{proof} Let $H=K[N]^G$ be the Hopf algebra giving the Hopf Galois structure on $K/k$, $H_2=K[N_2]^G$ the Hopf subalgebra of $H$ corresponding to the $G$-stable subgroup $N_2$ of $N$. By \cite{C-S} theorem 7.6, $K/F$ is a Hopf Galois extension with Hopf algebra
$H_2\otimes_k F$. By classical Galois theory, $F$ is the subfield of $K$ fixed by a subgroup $G'$ of $G$ and $G'=\Gal(K/F)$. Now $(H_2\otimes_k F)\otimes_F K$ is isomorphic to $H_2\otimes_k K$, hence to $K[N_2]$, since $H_2$ is a $K$-form of $k[N_2]$. Since $K/F$ is Galois with Galois group $G'$, by \cite{G-P} theorem 3.1, we have $H_2\otimes_k F \simeq  K[N_2]^{G'}$, hence $N_2$ is a regular subgroup of the symmetric group $S_r$ normalized by $\lambda_r(G')$.
\end{proof}

\begin{theorem}\label{char} Let $K/k$ be a finite Galois field extension, $n=[K:k]$, $G=\Gal(K/k)$. Let $K/k$ be given a split Hopf Galois structure by a regular subgroup $N$ of $S_n$ such that $N=N_1\times N_2$ with $N_1$ and $N_2$ $G$-stable subgroups of  $N$. Let $F=K^{N_2}$ be the subfield of $K$ fixed by $N_2$ and let us assume that $G'=\Gal(K/F)$ has a normal complement in $G$.  Then $K/F$ is Hopf Galois with group $N_2$ and  $F/k$ is Hopf Galois with group $N_1$. Moreover the Hopf Galois structure of $K/k$ given by $N$ is induced by the Hopf Galois structures given by $N_1$ and~$N_2$.
\end{theorem}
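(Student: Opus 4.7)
The proof has three stages. The first is immediate: Proposition~\ref{fs} applied to the $G$-stable subgroup $N_2$ yields that $K/F$ is Hopf Galois with regular $N_2 \subset S_r$ normalized by $\lambda_r(G')$.

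For the second stage, producing the Hopf Galois structure of type $N_1$ on $F/k$, I would work on the Greither--Pareigis side. The $G$-stability of $N_2$ means that $\lambda(G)$ normalizes $N_2$ in $\Sym(G)$, so $\lambda(G)$ permutes the $N_2$-orbits on $G$. Using the standard compatibility between the Hopf and the classical Galois correspondences invoked in the proof of Proposition~\ref{fs} via $K^{N_2}=K^{G'}$, the orbits of $N_2$ on $G$ are precisely the cosets $G/G'$, so the induced permutation action of $G$ on this orbit set coincides with $\lambda_t$. The quotient $N_1 \cong N/N_2$ acts regularly on the orbit set and is normalized by $\lambda_t(G)$ as the image of the normalization on the $G$-level, hence $F/k$ is Hopf Galois of type $N_1$.

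In the third stage I would verify that the induced construction from these two structures, using the normal complement $H$ given by hypothesis, recovers the original Hopf Galois structure on $K/k$. Translating via Byott's Theorem~\ref{theoB}, the original structure corresponds to $\beta:G\hookrightarrow\Hol(N)$; since $N_1$ and $N_2$ are $G$-stable and sit in $N$ as direct factors, the automorphism component of each $\beta(g)\in N\rtimes\Aut(N)$ must preserve each $N_i$, so $\beta$ factors through $\iota:\Hol(N_1)\times\Hol(N_2)\hookrightarrow\Hol(N)$ with projections $\beta_1$ and $\beta_2$. The natural identifications $\beta_1 = \varphi_1$, the Byott embedding for the $N_1$-structure on $F/k$, and $\beta_2|_{G'}=\varphi_2$, the Byott embedding for the $N_2$-structure on $K/F$, reduce the problem to showing $\beta_2(x)=1$ for every $x\in H$, i.e., $H \subset \ker\beta_2$.

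This final verification is the main obstacle. The subgroup $\ker\beta_2$ is normal in $G$ and meets $G'$ trivially by injectivity of $\beta_2|_{G'}$, so it has order at most $|H|$; the claim I expect to carry the proof is that the normal complement $H$ satisfies $\beta_2|_H=1$, forcing $\ker\beta_2=H$ and realizing $\beta$ as the induced embedding attached to the $N_1$- and $N_2$-structures above.
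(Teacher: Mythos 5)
Your first two stages are sound. Stage 1 is exactly Proposition~\ref{fs}. Stage 2 takes a genuinely different route from the paper: you argue on the Greither--Pareigis side, identifying the $N_2$-orbits on $G$ with the cosets $G/G'$ (which works because $N_2$ is normal in $N=N_1\times N_2$, so its left and right cosets agree, and because $K^{N_2}=K^{G'}$) and letting $N_1\cong N/N_2$ act regularly on the orbit set. The paper instead works entirely on the holomorph side: it writes $\varphi(g)=(n(g),\sigma(g))\in N\rtimes\Aut N$, uses $G$-stability of $N_1$ and $N_2$ to show $\sigma(g)$ preserves each factor, hence $\varphi(G)\subset\iota(\Hol(N_1)\times\Hol(N_2))$, and then checks the two Byott stabilizer conditions ($\varphi_1(G')$ stabilizes $1_{N_1}$, and only $1_{G'}$ stabilizes $1_{N_2}$) using the equivalence $g\in G'\Leftrightarrow\varphi(g)(1_N)\in N_2$. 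Either route establishes the first two assertions of the theorem, and neither of these stages uses the normal-complement hypothesis.

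The gap is in your stage 3, and you flag it yourself: the assertion $H\subseteq\ker\beta_2$ is stated as ``the claim I expect to carry the proof'' but is never proved, and nothing in your argument supplies it. The facts you do establish --- $\ker\beta_2\trianglelefteq G$ and $\ker\beta_2\cap G'=1$, hence $|\ker\beta_2|\le|H|$ --- are consistent with $\ker\beta_2$ being trivial or being a normal complement of $G'$ other than $H$, so they do not force $\ker\beta_2=H$; you would need to show directly that both $\pi_2(n(x))=1_{N_2}$ and $\sigma(x)|_{N_2}=\mathrm{id}$ for every $x\in H$, and no mechanism for that is offered. Note also that your reduction is stronger than strictly necessary: a Hopf Galois structure is the regular subgroup $N\subset\Sym(G)$, not the Byott morphism $\beta$, so ``induced'' requires only that $N$ coincide with $\iota(N_1\times N_2)$ under the identification $G\cong (G/G')\times G'$ furnished by the transversal $H$, which could in principle hold without $\beta_2|_H=1$. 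The paper does not isolate this step as you do; it stops after verifying the two stabilizer conditions together with the factorization $\varphi(G)\subset\iota(\Hol(N_1)\times\Hol(N_2))$, so you have in fact put your finger on the delicate point of the argument --- but identifying an obstacle is not the same as overcoming it, and as written your proposal does not prove the ``moreover'' clause of the theorem.
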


\begin{proof} Since $K/k$ is Hopf Galois with group $N$, we have a monomorphism

$$\begin{array}{llll}\varphi:& G & \rightarrow & \Hol(N)= N \rtimes \Aut N \\ & g & \mapsto & \varphi(g)=(n(g),\sigma(g)) \end{array}$$

\noindent such that $\varphi(1_G)$ is the stabilizer of $1_N$. If $n \in N, \sigma \in \Aut N$, we have $\sigma n \sigma^{-1}= \sigma (n)$ in $\Sym(N)$. Now, if $N_1$ and $N_2$ are $G$-stable, we have $\varphi(g)(N_i) \subset N_i$, $i=1, 2$,  for all $g \in G$, i.e. for $n_i \in N_i$, $i=1,2$, $\varphi(g) n_i \varphi(g)^{-1}= n(g) \sigma(g)n_i \sigma(g)^{-1} n(g)^{-1}= n(g) \sigma(g)(n_i) n(g)^{-1}\in N_i$ which implies $\sigma(g)(n_i) \in N_i$, since $N_i  \vartriangleleft N$. Then, for $(n,\sigma) \in \varphi(G)$ with $n =(n_1,n_2)$, we have $(n,\sigma)= \iota ((n_1, \sigma_{|N_1}),(n_2,\sigma_{|N_2}))$. Since $\varphi(G) \subset \iota (\Hol(N_1)\times \Hol(N_2))$, we obtain morphisms

$$\varphi_1:G \rightarrow \Hol(N_1), \quad \varphi_2:G' \rightarrow \Hol(N_2).$$

\noindent Since $F$ is the subfield of $K$ fixed by $N_2$ and $G'=\Gal(K/F)$, we have for an element $g \in G$, $g \in G' \Leftrightarrow \varphi(g)(1_N) \in N_2$, taking into account the definition of the bijection $b$ between $G$ and $N$ used in the proof of theorem 7.3 in \cite{Ch2}. Hence $\varphi_1(G')$ is the stabilizer of $1_{N_1}$. Now for $y \in G', \varphi_2(y)(1_{N_2})=1_{N_2} \Rightarrow \varphi_2(y)(1_{N}) \in N_1$. But we had $\varphi(y)(1_N) \in N_2$, hence $\varphi(y)(1_N)=1_N$, which implies $y=1_{G}$, so $\varphi_2(1_{G'})$ is the stabilizer of $1_{N_2}$.
\end{proof}

\section{Examples}

\subsection{Examples of induced Hopf Galois structures}
Applying corollary \ref{cor}, we obtain the following results.

\begin{itemize}
\item Galois extensions with Galois group $S_3=C_3 \rtimes C_2$ have induced Hopf Galois structures of cyclic type $C_6=C_3\times C_2$
\item Galois extensions with Galois group $D_{2n}=C_n \rtimes C_2$ have induced Hopf Galois structures of type $C_n\times C_2$
\item Galois extensions with Galois group $S_{n}=A_n \rtimes C_2$ have induced Hopf Galois structures of type $A_n\times C_2$
\item Galois extensions with Galois group $A_{4}=V_4 \rtimes C_3$ have induced Hopf Galois structures of type $V_4\times C_3$
\item Galois extensions with Galois group a Frobenius group $G=H\rtimes G'$, where $H$ is the Frobenius kernel and $G'$ a Frobenius complement, have induced Hopf Galois structures of type $H\times G'$. Let us note that Sonn \cite{S} has proved that all Frobenius groups occur as Galois groups over $\Q$.
\item Galois extensions with Galois group $\Hol(M)=M \rtimes \Aut (M)$ have induced Hopf Galois structures of type $M\times \Aut(M)$.
\end{itemize}

 A partial answer to the question of which groups are a semidirect product is given by theorem \ref{SZ} below. A \emph{Hall divisor} of an integer $n$ is a divisor $m$ of $n$ such that $(m, n/ m) = 1$.
A \emph{normal Hall subgroup} of a group $G$ is a normal subgroup $N$ such that its order $|N|$ is coprime with
its index in $G$, i.e. such that $|N|$ is a Hall divisor of $|G|$.

\begin{theorem}[Schur-Zassenhaus]\label{SZ} Let $G$ be a finite group of order $n$  and let $m$ be a Hall divisor of $n$.
If there exists a normal subgroup $N$ of $G$ with order $m$, then $N$ is a characteristic subgroup of $G$ and $N$ has a complement $H$ in $G$, i.e.
$G = N \rtimes H$. If $H$ and $H'$ are two complements of $N$ in $G$, then $H$ and $H'$ are conjugate.
\end{theorem}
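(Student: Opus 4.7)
The plan is to establish the three assertions in order, in each case inducting on $|G|$ and using the coprimality of $m$ and $n/m$ in an essential way.

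For the first claim, it suffices to show that $N$ is the unique normal subgroup of $G$ of order $m$, since any $\phi\in\Aut(G)$ sends $N$ to a normal subgroup of the same order. If $N'$ were another such, then $NN'$ would be a subgroup of order $m^{2}/|N\cap N'|$ dividing $|G|=m\cdot(n/m)$, and in view of $\gcd(m,n/m)=1$ this forces $|N\cap N'|=m$, hence $N'=N$.

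For the existence of a complement, I would induct on $|G|$ and reduce to the case where $N$ is abelian. Choose a prime $p\mid m$ and $P\in\mathrm{Syl}_p(N)$; the Frattini argument yields $G=N\cdot N_G(P)$. If $L:=N_G(P)\ne G$, then $L\cap N$ is a normal Hall subgroup of $L$ whose order divides $m$, and the inductive hypothesis applied to $L$ produces a subgroup of order $n/m$ whose trivial intersection with $N$ makes it a complement in $G$. If instead $L=G$, then $P\triangleleft G$: either $P\subsetneq N$ and one passes to $G/P$, recovering a complement by induction applied first to $G/P$ and then to the preimage; or $P=N$ is a $p$-group with nontrivial characteristic centre $Z(N)$, and one inducts on $G/Z(N)$ and lifts back. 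This funnels the problem into the case $N$ abelian, handled by the standard cohomological argument: the extension class of $1\to N\to G\to G/N\to 1$ lies in $H^2(G/N,N)$, a group killed both by $|N|$ and by $|G/N|$, hence by $\gcd(|N|,|G/N|)=1$, hence zero, so the extension splits.

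For the conjugacy of two complements, the same inductive scheme applies and reduces to the case $N$ abelian, where the set of splittings modulo $N$-conjugacy is $H^1(G/N,N)$, which vanishes by the same coprimality argument. The main obstacle is precisely this abelian case: the vanishing of $H^1$ and $H^2$ for a $G/N$-module of order coprime to $|G/N|$ is the genuine substance of the theorem, while the Sylow-theoretic reductions above are technical but routine bookkeeping.
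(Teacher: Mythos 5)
The paper states this as a classical result and offers no proof of its own, so your argument has to stand on its own merits. The characteristic-subgroup claim and the existence of a complement are handled correctly: uniqueness of the normal subgroup of order $m$ follows from the order count on $NN'$ exactly as you say, and your Frattini/Sylow reduction of the existence statement to the case of an abelian kernel, settled by the vanishing of $H^2(G/N,N)$ (a group annihilated by the coprime integers $|N|$ and $|G/N|$), is the standard proof.

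The conjugacy assertion, however, contains a genuine gap. You claim that ``the same inductive scheme applies and reduces to the case $N$ abelian'' and that the reductions are ``routine bookkeeping''. They are not: the Frattini-argument step that produces a complement inside $N_G(P)$ does not place two \emph{given} complements $H$ and $H'$ into a common proper subgroup, and no elementary Sylow-theoretic reduction of the conjugacy statement to the abelian case is known. The standard route assumes $N$ solvable and inducts along the derived series: pass to $G/[N,N]$, use the abelian case (vanishing of $H^1$) to replace $H'$ by an $N$-conjugate with $H[N,N]=H'[N,N]$, then apply induction to the complements of $[N,N]$ inside $H[N,N]$; a separate argument handles the case $G/N$ solvable. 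The fully general statement — which is what the theorem as quoted asserts — then follows only because $\gcd(|N|,|G/N|)=1$ forces one of $N$, $G/N$ to have odd order, hence to be solvable by the Feit--Thompson theorem. As written, your argument establishes conjugacy only for abelian (or, with the derived-series induction made explicit, solvable) $N$; the missing ingredient for the general case is Feit--Thompson, and it cannot be absorbed into ``bookkeeping''.
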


We have then that $G = N \rtimes H$, with $(|N|,|H|)=1$ is equivalent to $N$ being a normal Hall subgroup of $G$. Let us consider the class of groups $G$ having (at least) one normal Hall subgroup. For $p$ a prime integer, $p$-groups do not belong to this class; groups of order $2p^k$, with
$p\ge 3$ prime, and also groups of order $4p^k$, with $p\ge 5$ prime, belong to this class since its unique $p$-Sylow subgroup is a Hall normal subgroup. Therefore a Galois extension with Galois group in one of these two last sets has induced Hopf Galois structures.

A group $G$ of order $2m$ where $m$ is odd, has a subgroup of order $m$. Indeed, by Cauchy theorem, $G$ has an element $x$ of order 2. The image of $x$ by the regular representation $\varphi: G \rightarrow S_{2m}$ is an odd permutation. Then $N:=\varphi^{-1}(\varphi(G) \cap A_{2m})$ is a subgroup of order $m$ of $G$, therefore a normal Hall subgroup. The group $G$ is either a direct or semi-direct product of $N$ and $\langle x \rangle$. Hence Galois  extensions with Galois group $G$ have at least one Hopf Galois structure of type $N\times C_2$, either Galois or induced.

\subsection{Examples of split non-induced Hopf Galois structures} Not all split Hopf Galois extensions are induced. The quaternion group $H_8$ cannot be decomposed into a semi-direct product of two groups. However a Galois extension with Galois group $H_8$ has a Hopf Galois structure of type $C_2 \times C_2 \times C_2$. Let us write

$$H_8=\langle i,j| i^4=1, i^2=j^2, ij=ji^3\rangle=\{1,i,i^2,i^3, j, ij, i^2j,i^3j\}.$$

The morphism $\lambda: H_8 \rightarrow \Sym(H_8)$ given by the action of $H_8$ on itself by left translation is determined by

$$\begin{array}{l}
\lambda(i)=(1,i,i^2,i^3)(j,ij,i^2j,i^3j)\\
\lambda(j)=(1,j,i^2,i^2j)(i,i^3j,i^3,ij).
\end{array}
$$

\noindent Then, $\lambda(H_8)$ normalizes

$$
N = \langle (1,i^2)(i,i^3)(j,i^2j)(ij,i^3j), (1,i^3)(i,i^2)(i,ij)(i^2j,i^3j),(1,i^3j)(i,j)(i^2,ij)(i^3,i^2j)\rangle
$$

\noindent which is a regular subgroup of $\Sym(H_8)$ isomorphic to $C_2\times C_2\times C_2$.

Simple groups are another example of groups which are not a semidirect product of two subgroups, hence a Galois extension with Galois group a simple group has no induced Hopf Galois structures. One may wonder if it has split Hopf Galois structures.

\section{Counting Hopf Galois structures}

In this section we determine the number of split and induced Hopf Galois structures for some Galois extensions. We obtain some cases in which all split Hopf Galois structures are induced.

\subsection{The alternating group $A_4$}
Let $K/k$ be a Galois extension with Galois group $A_4$. We have checked that such an extension has only two types of Hopf Galois structures: $A_4$ and $C_2\times
C_2\times C_3$.

In \cite{Ca} it is shown that the number $e(A_4, A_4)$ of Hopf Galois structures of type $A_4$ is equal to $10$. Let us determine the number of induced Hopf Galois structures of type
$C_2\times C_2\times C_3$.

We have a unique choice for the nontrivial normal subgroup $H$, which is the Klein subgroup $V_4=\{ id, (1,2)(3,4),(1,3)(2, 4),(1,4)(2,3)\}.$
It has four different complements in $G$
$$
G'_1=\langle (2,3,4)\rangle,\  G'_2=\langle (1,3,4)\rangle,$$
$$
G'_3=\langle (1,2,4)\rangle,\  G'_4=\langle (1,2,3)\rangle.
$$

\noindent We have $G'_i=Stab(i,A_4)$ and they are conjugate.

For a fixed complement $G'$, if  $F=K^{G'}$, then $F/k$ is a quartic extension with Galois closure $K$. Therefore, there is a unique
Hopf Galois structure for $F/k$, which is given by $\lambda(V_4)\subset \Sym(V_4)$. The extension $K/F$ is Galois  of prime degree $3$
with Galois group $G'$. This is also the unique Hopf Galois structure for $K/F$. We obtain then a unique induced Hopf Galois structure for each $G'$.
Therefore $K/k$ has four different induced Hopf Galois structures of type $C_2\times C_2\times C_3$. We obtain then

$$e(A_4, V_4\times C_3)\geq 4.$$

\subsection{Groups of order $4p$}

Let us assume that $p$ is an odd prime, $G$ a nonabelian group of order $4p$ and $K/k$  a Galois extension with Galois group $G$.
Such a group $G$ has a unique $p$-Sylow subgroup $H$ and $p$ $2$-Sylow subgroups which are isomorphic either to the cyclic group $C_4$ or to
the elementary abelian group $C_2\times C_2$.

Let $G'$ be a $2$-Sylow subgroup of $G$ and $F$ its fixed field $K^{G'}$.
Since $F/k$ has degree $p$ and $G$ is solvable,
it is known  (cf. \cite{Ch1}) that $F/k$ is Hopf Galois.
Furthermore, in this case $F/k$ is almost classically Galois and has a unique Hopf Galois structure given by the normal complement $H$ of $G'$.
The number of induced Hopf Galois structures of type
$N_1\times N_2$ of $K/k$, with $N_1 \simeq H$,  depends on the number of Hopf Galois structures of type $N_2$ of $K/F$.

The number of Hopf Galois structures for Galois extensions with group isomorphic to $H$ is known:
\begin{center}
\begin{tabular}{c|c|c}
&$N_2\simeq C_4$ & $N_2\simeq  C_2\times C_2$ \\
\hline
$H\simeq C_4$
&1&1\\
$H\simeq C_2\times C_2$ &3&1\\
\end{tabular}
\end{center}

Putting all together we obtain the following numbers of induced Hopf Galois structures for $K/k$:

\begin{center}
\begin{tabular}{c|c|c}
& Structures $C_4\times C_p$ & Structures $C_2\times C_2\times C_p$ \\
\hline\hline
2-Sylow subgroup $\simeq C_4$
& $p$ &$p$\\
2-Sylow subgroup $\simeq C_2\times C_2$ & $3p$ &$p$\\
\end{tabular}
\end{center}

According to the results in \cite{Ko} these are the numbers of split Hopf Galois structures for $K/k$ of type
 $C_4\times C_p$ or $C_2 \times C_2 \times C_p$.

Since the dihedral group of order $4p$ is $D_{4p}\simeq C_2\times D_{2p}$, the dihedral
Hopf Galois structures are also split structures and we wonder if they are also induced.

The groups $D_{4p}$ and the generalized quaternion group $Q_p$ have a center of order 2. Let $G'=Z(G)$ and $F=K^{G'}$. The extension $F/k$ is Galois of order $2p$.
If it is non abelian it admits $2+p$ Hopf Galois structures, 2 of them of type $D_{2p}$ and $p$ of them of type $C_p$ (cf.\cite{B2}).
Therefore if $G=D_{4p}$, taking $G'=Z(G)$ we obtain 2 structures of type $C_2\times D_{2p}$ and $p$ structures of type $C_2\times C_{2p}=C_2\times C_2\times C_p$.

\subsection{Groups of order $pq$}

Let us assume that $G$ is a group of order $pq$ ($p>q$) and $K/k$ is a Galois extension with group $G$.
If $q\nmid p-1$, then $pq$ is a Burnside number and $K/k$ has a unique Hopf Galois structure , the classical Galois one (cf. \cite{B}).

Assume that $q\mid p-1$. Then, the group $G$ is either cyclic or metacyclic $C_p\rtimes C_q$. In the cyclic case there are $2q-1$ different
Hopf Galois structures for $K/k$, the classical one with $N\simeq C_{pq}$ (split) and $2q-2$ structures with
$N\simeq C_p\rtimes C_q$ (nonsplit).

Let us consider the nonabelian case $G\simeq C_p\rtimes C_q$. Such a group $G$ has a unique $p$-Sylow subgroup and $p$ $q$-Sylow subgroups.
Let $G'$ be a $q$-Sylow subgroup of $G$ and $F=K^{G'}$  the corresponding intermediate field. Since $F/k$ has prime degree $p$ and $G$ is solvable,
$K/k$ is Hopf Galois (cf. \cite{Ch1}). Furthermore, in this case $K/k$ is almost classically Galois and has a unique Hopf Galois structure. On the other hand, the same unicity property is true for the Galois extension  $K/F$ which has prime degree $q$.

Therefore, for each $G'$, we obtain exactly one induced Hopf Galois structure for $K/k$ and all together we obtain
in this way $p$ induced Hopf Galois structures for $K/k$. According to Theorem 6.2 in \cite{B2},
this covers all split structures for $K/k$.

In particular, if $p$ is an odd prime and $K/k$ is a dihedral extension of degree $2p$, its Hopf Galois
structures are the two given by $G$ and $G^{opp}$ (dihedral type) and the $p$ split structures of type $C_2\times C_p$ (cyclic type),
induced by the structures of $K/F$ and $F/k$, for $F=K^{G'}$ with $G'$ ranging over the set of complements in $G$ of the cyclic subgroup of order $p$ (cf. \cite{B2} Corollary 6.5).

\subsection{Safe primes and Frobenius groups}

Let $p$ be a safe prime, that is,  a prime such that $p=2q+1$ with $q$ also a prime.
Let   $G$ be a Frobenius group of order $p(p-1)$ and $K/k$ a Galois extension with group $G$.
This group $G$ has a unique $p$-Sylow subgroup and $p$ conjugate subgroups of order $p-1=2q$.
If $G'$ is one of these subgroups and $F=K^{G'}$, then $K/k$ is the Galois closure of $F/k$.

Again, $F/k$ has a unique Hopf Galois structure, with $N\simeq C_p$.
The Galois extension $K/F$ has Galois group isomorphic to $C_{2q}$.  If $q=2$ it has 2 different Hopf Galois structures,   one of them of
cyclic type $C_4$ and the other one of dihedral type $V_4$. We obtain then 5 induced structures of type $C_4\times C_5$ and 5 induced  structures of type $V_4\times C_5$ for a Frobenius extension of degree 20. This case was already treated when we considered groups of order $4p$ and we know that there are no further split structures.

If $q$ is odd, we know from \cite{B2} (Corollary 6.4) that $K/F$ has 3 Hopf Galois structures, namely the classical one and 2 of dihedral type.
We obtain then $p$ induced structures of type $C_{p-1}\times C_p$ and $2p$ induced structures of type $D_{p-1}\times C_p$ for a Frobenius group $F_{p(p-1)}$ with $p$ a safe prime $>5$. We may wonder if in this case these are all split structures.

\end{document}